\renewcommand{\thesection}{\arabic{section}}
\titleformat{\section}{\Large\bf\boldmath}{\thesection.}{2ex}{}{}
\titlespacing{\section}{0ex}{2ex}{1ex}
\renewcommand{\thesubsection}{\arabic{section}.\arabic{subsection}}
\titleformat{\subsection}{\large\bf\boldmath}{\thesubsection.}{2ex}{}{}
\titlespacing{\section}{0ex}{1.5ex}{0.5ex}
\theoremstyle{definition}\newtheorem{definition}{Definition}
\newtheorem{remark}[definition]{Remark}
\newtheorem{lemma}[definition]{Lemma}
\newtheorem{theorem}[definition]{Theorem}
\newtheorem{corollary}[definition]{Corollary}
\newcommand{\C}{\mathbb{C}}
\newcommand{\F}{\mathbb{F}}
\newcommand{\N}{\mathbb{N}}
\newcommand{\Z}{\mathbb{Z}}
\newcommand{\cE}{\mathcal{E}}
\newcommand{\recht}{\rightarrow}
\newcommand{\cU}{\mathcal{U}}
\newcommand{\eps}{\varepsilon}
\newcommand{\ovt}{\mathbin{\overline{\otimes}}}
\newcommand{\om}{\omega}
\newcommand{\cZ}{\mathcal{Z}}
\newcommand{\ot}{\otimes}
\newcommand{\cT}{\mathcal{T}}
\newcommand{\op}{^\text{op}}
\newcommand{\aff}{\operatorname{Aff}}
\begin{document}

\begin{center}
{\boldmath\LARGE\bf  Vanishing of the first continuous $L^2$-cohomology\vspace{0.5ex}\\ for II$_1$ factors}
\bigskip  %

{\sc by Sorin Popa\footnote{Mathematics Department, UCLA, CA 90095-1555 (United States), popa@math.ucla.edu\\
Supported in part by NSF Grant DMS-1101718} and Stefaan Vaes\footnote{KU~Leuven, Department of Mathematics, Leuven (Belgium), stefaan.vaes@wis.kuleuven.be \\
    Supported by Research Programme G.0639.11 of the Research Foundation~-- Flanders (FWO) and KU~Leuven BOF research grant OT/13/079.}}
\end{center}

\medskip

\begin{abstract}\noindent
We prove that the continuous version of the Connes-Shlyakhtenko first $L^2$-cohomology for II$_1$ factors, as proposed by A. Thom in \cite{Th06},
always vanishes.
\end{abstract}

In \cite{CS03},  A.\ Connes and D.\ Shlyakhtenko developed an $L^2$-cohomology theory for finite von Neumann algebras $M$, and more generally for weakly dense $^*$-subalgebras $A\subset M$ of such von Neumann algebras. Then in \cite{Th06}, A.\ Thom  provided an alternative, Hochschild-type characterization of the first such $L^2$-cohomology of $M$ as the quotient of the space of derivations $\delta:M\rightarrow \aff(M \ovt M\op)$ by the space of inner derivations, where $\aff(M \ovt M\op)$ denotes the $*$-algebra of operators affiliated with $M \ovt M\op$. Thom also proposed in \cite{Th06} a continuous version of the first $L^2$-cohomology, by considering the (smaller) space of derivations $\delta$  that are continuous from $M$ with the operator norm to $\aff(M \ovt M\op)$ with the topology of convergence in measure. He noted that in many cases (e.g., when $M$ has a Cartan subalgebra, or when $M$ is not prime), this cohomology vanishes, i.e.\ any continuous derivation of $M$ into $\aff(M \ovt M\op)$ is inner.

Following up on this work, V.\ Alekseev and D.\ Kyed have shown in \cite{AK11} that the first continuous $L^2$-cohomology also vanishes when $M$ has property (T), when $M$ is finitely generated with nontrivial fundamental group, or when $M$ has property Gamma. Recently, V.\ Alekseev proved in \cite{Al13} that this is also the case for the free group factors $L(\F_n)$.

In this article, we prove that in fact the first continuous $L^2$-cohomology vanishes for all finite von Neumann algebras. The starting point of our proof
is a key calculation in the proof of  \cite[Proposition 3.1]{Al13}, which provides a concrete sequence of elements $y_n$ in the II$_1$ factor
$M = L(\F_3)$ of the free group $\F_3$ with generators $a, b, c$, that tends to $0$ in operator norm, but
has the property that if a derivation $\delta : M \recht \aff(M \ovt M\op)$ satisfies $\delta(u_a) = u_a \ot \
1$ and $\delta(u_b) = \delta(u_c) = 0$, then $\delta(y_n)$ does not tend to $0$ in measure. More precisely, the $y_n$'s in \cite{Al13} are scalar multiples of  words
$w_n$ in $a, b, c$ with the property that $\delta(w_n)$ is a larger and larger sum of free independent Haar unitaries.
In the case of an arbitrary II$_1$ factor $M$, we fix a hyperfinite II$_1$ factor $R\subset M$
with trivial relative commutant, and then use
\cite{Po92} to ``simulate'' (in distribution) $L(\F_3)$ inside $M$, with $a$ any fixed unitary in $M$ and $b_m, c_m$ Haar unitaries  in $R$ such that $a, b_m, c_m$
are asymptotically free.
If now $\delta$ is a continuous derivation on $M$, then by subtracting an inner derivation, we may assume $\delta$ vanishes on $R$, thus on $b_m, c_m$.
If $\delta(a) \neq 0$, and if we formally define  $y_n$'s via the same formula as Alekseev's, with $a, b_m, c_m$ in lieu of $a, b, c$, then a careful estimation of norms of
$y_n$ and $\delta(y_n)$, which uses results in \cite{HL99}, shows that one still has $\|y_n\| \rightarrow 0$, while $\delta(y_n) \not\rightarrow 0$ in measure.

\vspace{4mm}

Let $M$ be a finite von Neumann algebra. We denote by $M\op$ the opposite von Neumann algebra and by $\aff(M \ovt M\op)$ the $*$-algebra of operators affiliated with $M \ovt M\op$. A \emph{derivation} $\delta : M \recht \aff(M \ovt M\op)$ is a linear map satisfying
$$\delta(ab) = (a \ot 1) \delta(b) + (1 \ot b\op)\delta(a) \quad\text{for all}\;\; a,b \in M \; .$$
For every $\xi \in \aff(M \ovt M\op)$, denote by $\partial \xi$ the \emph{inner} derivation defined as
$$(\partial \xi)(a) = (a \ot 1 - 1 \ot a\op)\xi \quad\text{for all}\;\; a \in M \; .$$

We equip $\aff(M \ovt M\op)$ with the \emph{measure topology,} i.e.\ the unique vector space topology with basic neighborhoods of $0$ given by
$$B(\tau_1,\eps) = \bigl\{\xi \in \aff(M \ovt M\op) \bigm| \exists \;\text{projection}\;\; p \in M\ovt M\op \;\text{with}\; \tau_1(p) > 1-\eps , \|\xi p\| < \eps \bigr\}$$
for all normal tracial states $\tau_1 : M \recht \C$ and all $\eps > 0$. If $\tau : M \recht \C$ is a normal \emph{faithful} tracial state, then $\{B(\tau,\eps) \mid \eps > 0\}$ is a family of basic neighborhoods of $0$ and there is no need to vary the trace.

\begin{theorem}\label{thm.inner}
Let $M$ be a finite von Neumann algebra.
Every derivation $\delta : M \recht \aff(M \ovt M\op)$ that is continuous from the norm topology on $M$ to the measure topology on $\aff(M \ovt M\op)$, is inner.
\end{theorem}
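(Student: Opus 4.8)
The plan is to argue by contradiction through a quantitative, free-probabilistic construction that follows and generalizes the computation of Alekseev \cite{Al13}. First I would reduce to the essential case. A general finite von Neumann algebra decomposes over its center; on the type I part the statement is classical (finite-dimensional factors have vanishing Hochschild cohomology, and the abelian case is covered by the Cartan-subalgebra observation of \cite{Th06}), and a routine separability argument (a single derivation is controlled by countably many elements) reduces everything to the case where $M$ is a II$_1$ factor with separable predual. For such $M$ I fix, using \cite{Po92}, a hyperfinite subfactor $R \subset M$ with trivial relative commutant $R' \cap M = \C$. The first genuine step is to show that $\delta|_R$ is implemented by some $\xi \in \aff(M \ovt M\op)$: since $R$ is amenable, one averages the naturally defined local implementers over the unitary groups of an exhausting chain of finite-dimensional subalgebras (a F\o{}lner-type/martingale averaging), and the norm-to-measure continuity of $\delta$ guarantees that these averages converge in measure to the desired $\xi$. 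Replacing $\delta$ by $\delta - \partial\xi$, I may then assume $\delta|_R = 0$. Since every element of $M$ is a linear combination of unitaries and $\delta$ is linear, it now suffices to prove that $\delta(a) = 0$ for every unitary $a \in M$.

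So I fix a unitary $a$ and suppose, for contradiction, that $\eta := \delta(a) \neq 0$. Using \cite{Po92}, I choose for each $m$ Haar unitaries $b_m, c_m \in R$ so that $a, b_m, c_m$ become asymptotically free as $m \to \infty$, thereby ``simulating'' $L(\F_3)$ in distribution inside $M$; because $\delta$ vanishes on $R$ we have $\delta(b_m) = \delta(c_m) = 0$. Transplanting Alekseev's construction, I form the unitary words $w_n$ in $a, b_{m(n)}, c_{m(n)}$ and set $y_n = \lambda_n w_n$ with scalars $\lambda_n$ to be fixed. Expanding $\delta(w_n)$ by the derivation identity and using $\delta(b_m) = \delta(c_m) = 0$, only the occurrences of $a^{\pm 1}$ contribute, so that
$$\delta(w_n) = \sum_{i=1}^{N_n} (p_i \ot s_i\op)\,\eta \; ,$$
where $p_i, s_i$ are the prefix/suffix subwords flanking the $i$-th occurrence of $a$ and $N_n \to \infty$. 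The combinatorics of $w_n$ (this is exactly the point of using three, rather than two, free generators) are designed so that the unitaries $p_i \ot s_i\op \in M \ovt M\op$ are, in the limit of asymptotic freeness, free Haar unitaries that are moreover free from $\eta$.

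It then remains to carry out the norm estimates, and this is where the main obstacle lies. With $\delta(y_n) = \lambda_n \sum_i (p_i \ot s_i\op)\eta$ and the choice $\lambda_n = N_n^{-1/2}$, one has $\|y_n\| = N_n^{-1/2} \to 0$ since each $w_n$ is a unitary, whereas $N_n^{-1/2}\sum_i (p_i \ot s_i\op) \to c$ in distribution, a circular element free from $\eta$, so that $\delta(y_n)$ converges in distribution to the R-diagonal element $c\,\eta$. Here the results of Haagerup and Larsen \cite{HL99} on the Brown measure of R-diagonal elements enter, first to control the growth of these sums (and hence to calibrate $\lambda_n$ and confirm $\|y_n\| \to 0$) and, crucially, to see that the limiting distribution of $c\,\eta$ is nonzero with no atom at $0$ as soon as $\eta \neq 0$; hence $\delta(y_n) \not\to 0$ in measure, contradicting the continuity of $\delta$ and forcing $\eta = 0$. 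The delicate points, which absorb most of the work, are that $\eta = \delta(a)$ is an arbitrary unknown affiliated operator rather than the explicit $a \ot 1$ of \cite{Al13}, and that $a, b_m, c_m$ are only asymptotically (not exactly) free; both are handled by a diagonal choice $m = m(n) \to \infty$ together with quantitative error estimates in the measure topology, so that the exact-freeness computation survives in the limit. Once $\delta(a) = 0$ for every unitary $a$, I conclude $\delta = \partial\xi$, that is, $\delta$ is inner.
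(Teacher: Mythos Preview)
Your overall architecture matches the paper's: reduce to a separable II$_1$ factor, choose a hyperfinite $R\subset M$ with $R'\cap M=\C1$, subtract an inner derivation so that $\delta|_R=0$, and then derive a contradiction from an Alekseev-type word construction together with the Haagerup--Larsen description of sums of free Haar unitaries. The reduction steps and the expansion $\delta(w)=\sum_i (p_i\ot s_i\op)\,\eta$ are exactly what the paper does (the paper in fact uses the simpler word $w_{m,n}=a_{1,n}u\,a_{2,n}u\cdots a_{m,n}u$ with $m$ asymptotically free Haar unitaries $a_{k,n}\in R$, rather than the three-generator Alekseev words, but this is cosmetic).

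The genuine gap is in your endgame. You normalize by $N_n^{-1/2}$, pass to a circular limit $c$, and assert that $c$ is \emph{free from $\eta=\delta(a)$}, so that $c\eta$ is $R$-diagonal with no atom at $0$. There is no mechanism producing this freeness: $\eta$ is an arbitrary element of $\aff(M\ovt M\op)$, the prefixes $p_i$ and suffixes $s_i$ themselves involve $a$, and the asymptotic freeness furnished by \cite{Po92} is freeness of the $R^\omega$-unitaries from $M$ inside $M^\omega$, not freeness of the resulting $T_m\in (M\ovt M\op)^\omega$ from $M\ovt M\op$. Without that freeness the $R$-diagonal/no-atom argument collapses, and with the $N_n^{-1/2}$ scaling one cannot rule out that $S_n\eta\to 0$ in measure. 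The paper sidesteps this entirely: writing $T_m=\sum_k(\text{prefix}_k)\ot(\text{suffix}_k)\op$, it uses \cite{HL99} only to compute the distribution of $|T_m|$ and to see that the spectral projection $q_m=\mathbf{1}_{[\sqrt{m},\infty)}(T_m^*T_m)$ satisfies $\tau(q_m)\to 1$. One then scales by $m^{-1/4}$ (not $m^{-1/2}$), uses the purely operator-theoretic inequality $m^{-1/2}T_m^*T_m\ge q_m$, and after cutting down by a projection $e_0$ making $\delta(u)e_0$ bounded, obtains directly
\[
\eps^2>\lim_{n\to\omega}\bigl\|\delta(m^{-1/4}w_{m,n})e_n\bigr\|_2^2
\;\ge\;\tau\bigl(e\,(\delta(u)e_0)^*\,q_m\,(\delta(u)e_0)\,e\bigr)
=\|q_m\,\delta(u)e_0\,e\|_2^2,
\]
which needs no relation whatsoever between $T_m$ and $\delta(u)$. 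This yields projections $p,q$ of trace $>1-\eps$ with $\|q\,\delta(u)\,p\|_2<\eps$, hence $\delta(u)=0$. Replacing your circular-limit step by this spectral-projection inequality (and the slower $m^{-1/4}$ scaling) closes the gap.
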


The following lemma is quite standard, but we include a detailed proof for completeness.

\begin{lemma}\label{lem.reduction}
It suffices to prove Theorem \ref{thm.inner} for II$_1$ factors $M$ with separable predual.
\end{lemma}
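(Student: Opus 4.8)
The plan is to prove the reduction in two stages, taking for granted that Theorem~\ref{thm.inner} holds for II$_1$ factors with separable predual. First I would show that the theorem then holds for \emph{all} finite von Neumann algebras with separable predual, by decomposing over the center into factors; afterwards I would remove the separability assumption by an exhaustion argument. Throughout I use that every inner derivation $\partial\xi$ is automatically continuous (since multiplication in $\aff(M\ovt M\op)$ is jointly continuous for the measure topology and $a\ot 1-1\ot a\op\to 0$ in measure as $\|a\|\to 0$), so two continuous derivations agreeing on a norm-dense $*$-subalgebra coincide.

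Stage~1 (reduction to factors). Let $M$ be separable and $\delta$ a continuous derivation. When $M=M_n(\C)$ the target $\aff(M\ovt M\op)=M\ovt M\op$ is finite-dimensional and $\delta$ is inner by the classical vanishing of the first Hochschild cohomology of a matrix algebra; this disposes of the type~I fibers below. For general $M$ with center $Z=Z(M)$, note first that $\delta|_Z$ is a derivation of the abelian algebra $Z$; a direct computation shows its values are supported off the diagonal of $Z\ovt Z\op=Z(M\ovt M\op)$, where one may divide by $z\ot 1-1\ot z\op$, so $\delta|_Z=\partial\xi_0$ is inner. Subtracting $\partial\xi_0$, I may assume $\delta|_Z=0$. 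The derivation identity then gives $(z\ot 1-1\ot z\op)\delta(a)=(a\ot1-1\ot a\op)\delta(z)=0$ for all $z\in Z$, so every $\delta(a)$ is supported on the diagonal of $Z\ovt Z\op$. Writing the central decomposition $M=\int_X^\oplus M_x\,d\mu(x)$, this lets me disintegrate $\delta=\int_X^\oplus\delta_x\,d\mu(x)$ into derivations $\delta_x:M_x\recht\aff(M_x\ovt M_x\op)$ of the factors $M_x$. Each $M_x$ is either a matrix algebra or a separable II$_1$ factor, so each $\delta_x=\partial\xi_x$ is inner; a measurable selection of $x\mapsto\xi_x$ (von Neumann's selection theorem applied to the measurable relation cut out by the coboundary equation) yields $\xi=\int_X^\oplus\xi_x\,d\mu(x)\in\aff(M\ovt M\op)$ with $\delta=\partial\xi$.

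Stage~2 (removal of separability). Let $M$ be arbitrary and $\delta$ continuous. Call a separable von Neumann subalgebra $N\subseteq M$ \emph{$\delta$-invariant} if $\delta(N)\subseteq\aff(N\ovt N\op)$. Since each individual $\delta(a)$ is a closed operator affiliated with some separable subalgebra, a standard closing-up argument shows that every separable subalgebra is contained in a $\delta$-invariant one; hence the $\delta$-invariant separable subalgebras form a directed family whose union is all of $M$. For each such $N$ the restriction $\delta|_N:N\recht\aff(N\ovt N\op)$ is a continuous derivation, hence inner by Stage~1: $\delta|_N=\partial\xi_N$ with $\xi_N\in\aff(N\ovt N\op)\subseteq\aff(M\ovt M\op)$. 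It remains to patch the $\xi_N$ into a single $\xi$ with $\partial\xi|_N=\delta|_N$ for all $N$, which gives $\partial\xi=\delta$.

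The main obstacle is exactly this patching, because $\xi_N$ is highly non-unique: the kernel of $\partial$ on $\aff(N\ovt N\op)$ is large (it consists of all elements whose associated operator has range in $Z(N)$). To control this I would use the trace-preserving conditional expectations: for $N\subseteq N'$ the bimodule map $E_N\ot E_N\op:\aff(N'\ovt N'\op)\recht\aff(N\ovt N\op)$ sends any solution of the coboundary equation over $N'$ to one over $N$, so the family $(\xi_N)$ can be rendered coherent, and I would then extract $\xi$ as a limit of this coherent net in the complete measure topology. Proving that this net actually converges is the delicate point; I expect to close it by extracting a uniform bound in measure from the continuity of $\delta$, equivalently by a martingale-type convergence argument along the increasing family of $\delta$-invariant subalgebras. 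The measurable selection in Stage~1 and the abelian computation for $\delta|_Z$ are the remaining, more routine, technical points.
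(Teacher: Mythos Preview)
Your Stage~2 has a genuine gap at exactly the point you flag as the ``main obstacle.'' The map $E_N\ot E_N\op$ does \emph{not} extend from $N'\ovt N'^{\op}$ to $\aff(N'\ovt N'^{\op})$: already for $N'=L^\infty[0,1]$ and $N=\C 1$, the conditional expectation is integration, which is undefined on a generic a.e.-finite measurable function. So you cannot render the family $(\xi_N)$ coherent in this way, and without coherence the proposed martingale/convergence argument has nothing to work with. The paper sidesteps the patching problem entirely with a trick you are missing: \emph{before} running the closing-up argument it fixes a diffuse abelian $A\subset M$ with separable predual and, invoking \cite[Theorem~6.4]{Th06}, subtracts an inner derivation so that $\delta|_A=0$. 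Then for each $x\in M$ one produces a separable $\delta$-invariant $N\supset A\cup\{x\}$; the separable case gives $\delta|_N=\partial\xi$, but $\delta|_A=0$ together with $A$ diffuse forces $\xi=0$, hence $\delta(x)=0$. No global $\xi$ ever needs to be assembled, because each local implementer is forced to vanish.

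Your Stage~1 also takes a harder road than the paper. Disintegrating $\delta$ over a direct integral requires you to check that a.e.\ fiber $\delta_x$ is itself norm--measure continuous and then to make a measurable selection of $\xi_x$; neither step is routine here (the target $\aff(M_x\ovt M_x\op)$ is not a standard Borel space in any obvious way). The paper avoids direct integrals altogether: it splits off the maximal central projection $p_0$ with $\cZ(M)p_0$ diffuse and applies \cite[Theorem~6.4]{Th06} to conclude $\delta|_{Mp_0}$ is inner outright; the remaining central projections $p_1,p_2,\ldots$ are minimal, so each $Mp_n$ is already a II$_1$ factor and the implementers can be summed over a \emph{countable} orthogonal family. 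This is both simpler and free of measurable-selection issues.
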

\begin{proof}
We prove the lemma in different steps.

{\it Step 1. It suffices to prove Theorem \ref{thm.inner} for diffuse, countably decomposable $M$.} Take a set $I = I_1 \sqcup I_2$ and an orthogonal family of projections $p_i \in \cZ(M)$ with $\sum_{i \in I} p_i = 1$ and such that for all $i \in I_1$, we have that $M p_i$ is countably decomposable and diffuse, and such that for all $i \in I_2$, we have that $M p_i$ is a matrix algebra. Since the projections $p_i$ are orthogonal, the element
$$\xi = \sum_{i \in I} (p_i \ot 1)\delta(p_i)$$
is well defined in $\aff(M\ovt M\op)$. The following direct computation shows that
\begin{equation}\label{eq.1}
\delta(p_k) = (\partial \xi)(p_k) \quad\text{for all}\;\; k \in I \; .
\end{equation}
Indeed,
\begin{equation}\label{eq.2}
(\partial \xi)(p_k) = (p_k \ot 1)\delta(p_k) - \sum_{i \in I} (p_i \ot p_k\op)\delta(p_i) \; .
\end{equation}
But, for all $i$ and $k$, we have
$$(1 \ot p_k\op)\delta(p_i) = \delta(p_i p_k) - (p_i \ot 1)\delta(p_k) \; .$$
Multiplying with $p_i \ot 1$ and summing over $i$, we find that
$$\sum_{i \in I} (p_i \ot p_k\op)\delta(p_i) = (p_k \ot 1)\delta(p_k) - \delta(p_k) \; .$$
In combination with \eqref{eq.2}, we find that \eqref{eq.1} holds.

Replacing $\delta$ by $\delta - \partial \xi$, we may assume that $\delta(p_i) = 0$ for all $i \in I$. It follows that $\delta(M p_i) \subset \aff(Mp_i \ovt (Mp_i)\op)$ for every $i \in I$. We denote by $\delta_i$ the restriction of $\delta$ to $M p_i$. By the assumption of step~1, $\delta_i$ is inner when $i \in I_1$. So for $i \in I_1$, we have that $\delta_i = \partial \xi_i$ for some $\xi_i \in \aff(M p_i \ovt (M p_i)\op)$. When $i \in I_2$, we have that $M p_i$ is a matrix algebra and we can take a complete system of matrix units $e^i_{jk}$ for $M p_i$. We then get that $\delta_i = \partial \xi_i$ for
$$\xi_i = \sum_k (e^i_{k1} \ot 1) \delta_i(e^i_{1k}) \; .$$
The vector $\xi = \sum_{i \in I} \xi_i$ is well defined in $\aff(M \ovt M\op)$ and $\delta = \partial \xi$.

{\it Step 2. It suffices to prove Theorem \ref{thm.inner} when $M$ is diffuse and has separable predual.} Using step~1, we may already assume that $M$ is diffuse and admits a faithful normal tracial state $\tau$ that we keep fixed. We start by proving the following three statements, using that $M$ admits the faithful trace $\tau$.

{\it If $\xi \in \aff(M \ovt M\op)$, there exists a von Neumann subalgebra $N \subset M$ with separable predual such that $\xi \in \aff(N \ovt N\op)$.} Take an increasing sequence of projections $p_n \in M \ovt M\op$ such that $\tau(1-p_n) \recht 0$ and $\xi p_n \in M \ovt M\op$ for all $n$. In particular, $\xi p_n \in L^2(M \ovt M\op) = L^2(M) \ot L^2(M\op)$ and we can take separable Hilbert subspaces $H_n \subset L^2(M)$, $K_n \subset L^2(M\op)$ such that $\xi p_n \in H_n \ot K_n$. We then find countable subsets $V_n \subset M$ such that for every $n$, the vector $\xi p_n$ belongs to the $\|\,\cdot\,\|_2$-closed linear span of $\{a \ot b\op \mid a,b \in V_n\}$. Defining $N$ as the von Neumann subalgebra of $M$ generated by all the sets $V_n$, our statement is proven.

{\it Let $N_1 \subset M$ be a von Neumann subalgebra with separable predual. Then there exists a von Neumann subalgebra $N_2 \subset M$ with separable predual such that $N_1 \subset N_2$ and $\delta(N_1) \subset \aff(N_2 \ovt N_2\op)$.} Take a separable and weakly dense $C^*$-subalgebra $B_1 \subset N_1$. By the previous paragraph and because $\delta$ is norm-measure continuous, we can take $N_2 \subset M$ with separable predual such that $\delta(B_1) \subset \aff(N_2 \ovt N_2\op)$. Replacing $N_2$ by the von Neumann algebra generated by $N_1$ and $N_2$, we may assume that $N_1 \subset N_2$. Since $\delta(B_1) \subset \aff(N_2 \ovt N_2\op)$, it follows from \cite[Lemma 4.2 and Theorem 4.3]{Th06} that $\delta(N_1) \subset \aff(N_2 \ovt N_2\op)$.

{\it Let $N_1 \subset M$ be a von Neumann subalgebra with separable predual. Then there exists a von Neumann subalgebra $N \subset M$ with separable predual such that $N_1 \subset N$ and $\delta(N) \subset \aff(N \ovt N\op)$.} Using the previous paragraph, we inductively find an increasing sequence of von Neumann subalgebras $N_1 \subset N_2 \subset \cdots$ with separable predual such that $\delta(N_k) \subset \aff(N_{k+1} \ovt N_{k+1}\op)$ for all $k$. We define $N$ as the von Neumann algebra generated by all the $N_k$. By construction, $N$ has separable predual and $\delta(N_k) \subset \aff(N \ovt N\op)$ for all $k$. Again using \cite[Lemma 4.2 and Theorem 4.3]{Th06}, it follows that $\delta(N) \subset \aff(N \ovt N\op)$.

We can now conclude the proof of step~2. Since $M$ is diffuse, we can fix a diffuse abelian von Neumann subalgebra $A \subset M$ with separable predual. By \cite[Theorem 6.4]{Th06}, we can replace $\delta$ by $\delta - \partial \xi$ for some $\xi \in \aff(M \ovt M\op)$ and assume that $\delta(a) = 0$ for all $a \in A$. We prove that $\delta(x) = 0$ for all $x \in M$. Fix an arbitrary $x \in M$. Define $N_1$ as the von Neumann algebra generated by $A$ and $x$. Note that $N_1$ has a separable predual. By the previous paragraph, we can take a von Neumann subalgebra $N \subset M$ with separable predual such that $N_1 \subset N$ and $\delta(N) \subset \aff(N \ovt N\op)$. By the initial assumption of step~2, the restriction of $\delta$ to $N$ is inner. So we can take a $\xi \in \aff(N \ovt N\op)$ such that $\delta(y) = (\partial \xi)(y)$ for all $y \in N$. Since $A \subset N$ and $\delta(a)=0$ for all $a \in A$, it follows that $(a \ot 1)\xi = (1 \ot a\op)\xi$ for all $a \in A$. Since $A$ is diffuse, this implies that $\xi = 0$. Since $x \in N$, it then follows that $\delta(x) = 0$. This concludes the proof of step~2.

{\it Step 3. Proof of the lemma~: it suffices to prove Theorem \ref{thm.inner} when $M$ is a II$_1$ factor with separable predual.} Using step~2, we may already assume that $M$ is diffuse and has separable predual. Let $p_0 \in \cZ(M)$ be the maximal projection such that $\cZ(M) p_0$ is diffuse (possibly, $p_0=0$). Let $p_1,p_2,\ldots$ be the minimal projections in $\cZ(M)(1-p_0)$. Note that $\sum_{n=0}^\infty p_n = 1$. As in the proof of step~1, we may assume that $\delta(M p_n) \subset \aff(Mp_n \ovt (M p_n)\op)$ for all $n$. Denote by $\delta_n$ the restriction of $\delta$ to $M p_n$. Since $M p_0$ has a diffuse center, it follows from \cite[Theorem 6.4]{Th06} that $\delta_0$ is inner. For all $n \geq 1$, we have that $M p_n$ is a II$_1$ factor with separable predual. So by assumption, all $\delta_n$, $n \geq 1$, are inner. But then also $\delta$ is inner.
\end{proof}

\begin{proof}[Proof of Theorem \ref{thm.inner}]
Using Lemma \ref{lem.reduction}, it suffices to take a II$_1$ factor $M$ with separable predual and a derivation $\delta : M \recht \aff(M \ovt M\op)$ that is continuous from the norm topology on $M$ to the measure topology on $\aff(M \ovt M\op)$. Denote by $\tau$ the unique tracial state on $M$. By \cite[Corollary 4.1]{Po81}, we can fix a copy of the hyperfinite II$_1$ factor $R \subset M$ such that $R'\cap M = \C 1$. By \cite[Theorem 6.4]{Th06}, we can replace $\delta$ by $\delta - \partial \xi$ and assume that $\delta(x) = 0$ for all $x \in R$. We prove that $\delta = 0$. Fix a unitary $u \in \cU(M)$ with $\tau(u) = 0$. It suffices to prove that $\delta(u) = 0$.

Fix a free ultrafilter $\om$ on $\N$ and consider the ultrapower $M^\om$. By \cite[Corollary on p.\ 187]{Po92}, choose a unitary $v \in R^\om$ such that the subalgebras $v^k M v^{-k} \subset M^\om$, $k \in \Z$, are free. Fix a Haar unitary $a \in \cU(R)$, i.e.\ a unitary satisfying $\tau(a^m) = 0$ for all $m \in \Z \setminus \{0\}$. Define, for $k \geq 1$, $a_k = v^k a v^{-k}$. It follows that $a_k \in \cU(R^\om)$ are $*$-free Haar unitaries that are moreover free w.r.t.\ $M$. Write $a_k = (a_{k,n})$ with $a_{k,n} \in \cU(R)$.

Similar to the definition of $x_n$ in the proof of  \cite[Proposition 3.1]{Al13},
we consider for all large $n$ and $m$, the unitary
$$w_{m,n} = a_{1,n} u \, a_{2,n} u \, \cdots \, a_{m,n} u$$
and prove that either $\delta(u)=0$ or $\delta(w_{m,n})$ is ``very large almost everywhere'', contradicting the continuity of $\delta$.

Since $\delta(x) = 0$ for all $x \in R$, we get that
\begin{equation}\label{sterreke}
\delta(w_{m,n}) = \Bigl(\sum_{k=1}^m a_{1,n} u \, a_{2,n} \, \cdots a_{k-1,n} u \, a_{k,n} \ot \bigl(a_{k+1,n} u \cdots a_{m,n} u\bigr)\op\Bigr) \, \delta(u) \; ,
\end{equation}
where, by convention, the first term in the sum is $a_{1,n} \ot (a_{2,n} u \cdots a_{m,n} u)\op$ and the last term is $a_{1,n} u \cdots a_{m-1,n} u \, a_{m,n} \ot 1$.

Consider, in the ultrapower $(M \ovt M\op)^\om$, the element
$$T_m = \sum_{k=1}^m a_{1} u \, a_{2} u \, \cdots a_{k-1} u \, a_k \ot \bigl(a_{k+1} u \cdots a_m u\bigr)\op \; .$$
We claim that $T_m$ is the sum of $m$ $*$-free Haar unitaries. To prove this, it suffices to show that the first tensor factors $a_1 , a_1 u a_2, a_1 u a_2 u a_3, \ldots$ form a $*$-free family of Haar unitaries. Since $a_1,a_2,a_3,\ldots$ is a $*$-free family of Haar unitaries that are $*$-free w.r.t.\ $u$, also the Haar unitaries $a_1, ua_2, ua_3, ua_4,\ldots$ are $*$-free. But then the conclusion follows by taking the product of the first $k$ unitaries in this last sequence, again producing a $*$-free family of Haar unitaries.

Since $T_m$ is the sum of $m$ $*$-free Haar unitaries, we get from \cite[Example 5.5]{HL99} an explicit formula for the spectral distribution of $|T_m|$. It follows that $|T_m|$ has the same distribution as $2 \sqrt{m-1} \, S_m$, where $S_m$ is a sequence of random variables satisfying $0 \leq S_m \leq 1$ and converging in distribution to the normalized quarter circle law. Therefore, the spectral projections
$$q_m = \mathbf{1}_{[\sqrt{m},+\infty)}(T_m^* T_m) = \mathbf{1}_{[m^{1/4},+\infty)}(\, |T_m| \,) = \mathbf{1}_{\bigl[ m^{1/4} (4(m-1))^{-1/2} ,+\infty \bigr)}(S_m)$$
satisfy $\lim_m \tau(q_m) = 1$. Write $q_m = (q_{m,n})$ where $q_{m,n}$ are projections in $M \ovt M\op$.

Fix an arbitrary $\eps > 0$. Since $\delta$ is continuous, fix $\rho > 0$ such that $\delta(z) \in B(\tau,\eps/2)$ whenever $z \in M$ and $\|z\| < \rho$. Take $m$ large enough such that $m^{-1/4} < \rho$ and $\tau(q_m) > 1-\eps$. For every $n$, the element $m^{-1/4} w_{m,n}$ has norm less than $\rho$. Therefore, $\delta(m^{-1/4} w_{m,n})$ belongs to $B(\tau,\eps/2)$ and we find a projection $p_n \in M \ovt M\op$ with
$$\tau(p_n) > 1-\eps/2 \quad\text{and}\quad \bigl\|\delta\bigl(m^{-1/4} w_{m,n}\bigr) \, p_n \bigr\| < \eps/2 \; .$$
We also fix a projection $e_0 \in M \ovt M\op$ with $\tau(e_0) > 1 - \eps/2$ and such that $\delta(u) e_0 \in M \ovt M\op$. We write $e_n = e_0 \wedge p_n$ and view $e = (e_n)$ as a projection in $(M \ovt M\op)^\omega$. By \eqref{sterreke}, we have in $(M \ovt M\op)^\omega$ the equality $(\delta(w_{m,n}) e_0)_n = T_m \, (\delta(u) e_0)$, and therefore also that $(\delta(w_{m,n}) e_n)_n = T_m \, (\delta(u) e_0) \, e$. We then find that
\begin{align*}
\eps^2 &> \lim_{n \recht \omega} \bigl\|\delta\bigl(m^{-1/4} w_{m,n}\bigr) \, e_n \bigr\|^2 \\
&\geq \lim_{n \recht \omega} \bigl\|\delta\bigl(m^{-1/4} w_{m,n}\bigr) \, e_n \bigr\|_2^2 \\
&= \tau\bigl( e \; (\delta(u)e_0)^* \; m^{-1/2} T_m^* T_m \; (\delta(u)e_0) \; e \bigr) \\
&\geq \tau\bigl( e \; (\delta(u)e_0)^* \; q_m \; (\delta(u)e_0) \; e \bigr) \; .
\end{align*}
Since $\tau(q_m) > 1-\eps$, we can fix $n$ such that
$$\|q_{m,n} \, \delta(u) \, e_n \|_2 < \eps \quad\text{and}\quad \tau(q_{m,n}) > 1 - \eps \; .$$
Since $\tau(e_n) > 1-\eps$, we have proven that for every $\eps > 0$, there exist projections $p,q \in M \ovt M\op$ such that $\tau(p) > 1-\eps$, $\tau(q) > 1-\eps$ and $\|q \delta(u) p\|_2 < \eps$. This means that $\delta(u) = 0$.
\end{proof}

The proof of Theorem \ref{thm.inner} gives no indication as to whether or not the Connes-Shlyakhtenko first $L^2$-cohomology vanishes as well. Note however that in order for a first $L^2$-cohomology theory to ``work well'' for II$_1$ factors $M$, the corresponding derivations should be uniquely determined by their values on a set of elements generating $M$ as a von Neumann algebra. In order for this to be the case, the derivations should normally satisfy some continuity property, even if that continuity is ``very weak''. However, by combining Theorem \ref{thm.inner} with the closed graph theorem, it follows that any derivation from $M$ into $\aff(M \ovt M\op)$ that satisfies some ``reasonable''  weak continuity property, must in fact be inner (see also
Remark \ref{rem} hereafter):

\begin{corollary}\label{cor}
Let $M$ be a finite von Neumann algebra and write $\cE = \aff(M \ovt M\op)$. Assume that $\delta : M \recht \cE$ is a derivation. If $\delta$ has a closed graph for the norm topology on $M$ and the measure topology on $\cE$, then $\delta$ is inner.

This is in particular the case if $\delta$ is norm-$\cT$-continuous w.r.t.\ any vector space topology $\cT$ on $\cE$ satisfying the following two properties:
\begin{itemize}
\item the inclusion $M \ovt M\op \recht \cE$ is norm-$\cT$-continuous;
\item for every fixed $a \in M \ovt M\op$, the map $\cE \recht \cE : \xi \mapsto \xi a$ is $\cT$-$\cT$-continuous.
\end{itemize}
\end{corollary}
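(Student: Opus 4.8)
The plan is to reduce the corollary to Theorem~\ref{thm.inner} via the closed graph theorem. Theorem~\ref{thm.inner} already asserts that every norm-measure continuous derivation is inner, so the entire content of the corollary is to promote a weaker hypothesis (closed graph, or norm-$\cT$-continuity for a suitable topology $\cT$) to genuine norm-measure continuity. First I would recall that the closed graph theorem is available in this setting because $M$ with the norm is a Banach space and $\aff(M\ovt M\op)$ with the measure topology is a complete metrizable topological vector space (indeed, when $M$ admits a faithful normal tracial state the sets $B(\tau,\eps)$ give a translation-invariant metric, and completeness of $\aff$ in measure is classical). Thus a linear map $\delta:M\recht\cE$ with closed graph for these two topologies is automatically continuous, and Theorem~\ref{thm.inner} then gives that $\delta$ is inner. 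This settles the first assertion, modulo checking that the hypotheses of the closed graph theorem are met; by Lemma~\ref{lem.reduction} we may assume $M$ is a II$_1$ factor with separable predual, so both spaces are separable and the metrizability and completeness are unproblematic.

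For the second assertion I would show that norm-$\cT$-continuity of $\delta$, under the two stated compatibility properties of $\cT$, forces $\delta$ to have a closed graph for the norm and measure topologies, after which the first part applies. The natural route is to argue that the $\cT$-topology is weaker than (coarser than) the measure topology on the relevant pieces, so that norm-$\cT$-continuity upgrades to a closed-graph statement for the finer measure topology. Concretely, suppose $x_j \recht x$ in norm in $M$ and $\delta(x_j) \recht \xi$ in measure in $\cE$; I must show $\xi = \delta(x)$. By norm-$\cT$-continuity, $\delta(x_j)\recht\delta(x)$ in $\cT$. So it suffices to know that the measure limit $\xi$ and the $\cT$-limit $\delta(x)$ must coincide, i.e.\ that $\cT$ is Hausdorff and coarser than the measure topology on $\cE$. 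The second bullet (right multiplication $\xi\mapsto\xi a$ being $\cT$-continuous for $a\in M\ovt M\op$) combined with the first bullet (the inclusion $M\ovt M\op\recht\cE$ being norm-$\cT$-continuous) is exactly what lets me compare the two topologies: if $\delta(x_j)\recht\xi$ in measure, then for a projection $p\in M\ovt M\op$ with $\tau(1-p)$ small one has $\delta(x_j)p\recht\xi p$ in norm in $M\ovt M\op$, hence in $\cT$ by the first bullet, while on the other hand $\delta(x_j)p\recht\delta(x)p$ in $\cT$ by the second bullet applied to right multiplication by $p$. Matching these on a sequence of projections $p$ increasing to $1$ identifies $\xi$ with $\delta(x)$, giving the closed graph.

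The step I expect to be the main obstacle is the comparison of the two topologies in the last paragraph, and in particular verifying the Hausdorff separation needed to conclude $\xi=\delta(x)$ from $\xi p = \delta(x)p$ for enough projections $p$. One must ensure $\cT$ separates points well enough that agreement of $\xi p$ and $\delta(x)p$ for all $p$ in an increasing net of projections with $\tau(p)\recht 1$ really forces $\xi=\delta(x)$ as affiliated operators; this is where the faithfulness of $\tau$ and the structure of convergence in measure enter, since two affiliated operators that agree after right multiplication by a sequence of projections tending strongly to $1$ must be equal. A secondary technical point is to confirm that the closed graph theorem applies in the exact completeness/metrizability framework at hand, which is why reducing to separable II$_1$ factors via Lemma~\ref{lem.reduction} at the outset is convenient.
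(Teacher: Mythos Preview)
Your approach is essentially the paper's: for the first assertion, reduce to the case of a faithful normal tracial state so that the measure topology on $\cE$ is complete metrizable, invoke the closed graph theorem, and apply Theorem~\ref{thm.inner}; for the second assertion, verify the closed graph by cutting down with projections so as to compare the $\cT$-limit with the measure limit. Two small points are worth tightening.

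First, invoking Lemma~\ref{lem.reduction} to pass to a II$_1$ factor is not quite legitimate as stated, since that lemma reduces Theorem~\ref{thm.inner}, not the corollary; the reductions there subtract inner derivations, and you would need to check that the closed-graph hypothesis survives. The paper instead only repeats the elementary step~1 argument (decompose along central projections into countably decomposable pieces) directly for~$\delta$, which is enough to secure a faithful trace and hence the complete metric needed for the closed graph theorem.

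Second, your sentence ``for a projection $p\in M\ovt M\op$ with $\tau(1-p)$ small one has $\delta(x_j)p\to\xi p$ in norm'' is not correct for a \emph{fixed} $p$: convergence in measure only furnishes, for each $j$, a projection $p_j$ (depending on $j$) with $\tau(1-p_j)$ small and $\|(\delta(x_j)-\xi)p_j\|$ small. The paper's fix is to choose $p_n$ with $\tau(1-p_n)<2^{-n}$ and $\|(\delta(x_n)-\xi)p_n\|<1/n$, set $q_k=\bigwedge_{n\ge k}p_n$, and observe that for each fixed $k$ one then has $(\delta(x_n)-\xi)q_k\to 0$ in \emph{operator norm} (since $q_k\le p_n$ for $n\ge k$), hence in $\cT$ by the first bullet; combined with $\delta(x_n)q_k\to 0$ in $\cT$ from the second bullet, this yields $\xi q_k=0$ for all $k$, and letting $k\to\infty$ gives $\xi=0$ on the support of $\tau$. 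Your identification of the Hausdorff issue is apt: the conclusion $\xi q_k=0$ from ``the constant sequence $\xi q_k$ tends to $0$ in $\cT$'' does use that $\cT$ separates points.
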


\begin{proof}
Take an orthogonal family of projections $p_i \in \cZ(M)$ such that $\sum_{i \in I} p_i = 1$ and every $M p_i$ is countably decomposable. As in step~1 of Lemma \ref{lem.reduction}, we may assume that $\delta(M p_i) \subset \aff(M p_i \ovt (M p_i)\op)$. If $\delta$ has closed graph, the restrictions $\delta_i$ of $\delta$ to $M p_i$ still have closed graph. If all these restrictions $\delta_i$ are inner, also $\delta$ is inner. So to prove the first part of the corollary, we may assume that $M$ admits a faithful normal tracial state $\tau$ that we keep fixed. But then, the formula
$$d(\xi,\eta) = \inf \{ \eps > 0 \mid \exists \; \text{projection}\; p \in M \ovt M\op : \tau(1-p) < \eps \; \text{and} \; \|(\xi-\eta)p\| < \eps \}$$
defines a translation invariant, complete metric on $\aff(M \ovt M\op)$ that induces the measure topology. So by \cite[2.15]{Ru91}, the closed graph theorem is valid and we find that $\delta$ is continuous, and hence inner by Theorem \ref{thm.inner}.

Assume now in general that $\delta$ is norm-$\cT$-continuous. To prove that $\delta$ has closed graph, assume that $\|x_n\| \recht 0$ and $\delta(x_n) \recht \xi$ in the measure topology. We have to show that $\xi = 0$. Fix a normal tracial state $\tau$ on $M$. Choose projections $p_n \in M \ovt M\op$ such that $\tau(p_n) > 1-2^{-n}$ and $\|(\delta(x_n) - \xi)p_n\| < 1/n$. Define the projections $q_k = \bigwedge_{n \geq k} p_n$ and note that $q_k$ is increasing and satisfies $\tau(q_k) \recht 1$. For every fixed $k$, we get that $(\delta(x_n) - \xi)q_k$ converges to $0$ in norm, as $n \recht \infty$. By the first assumption on $\cT$, this convergence also holds in $\cT$. But also $\delta(x_n) \recht 0$ in $\cT$ so that, by our second assumption on $\cT$, the sequence $\delta(x_n) q_k$ converges to $0$ in $\cT$ as $n \recht \infty$. We conclude that $\xi q_k = 0$ for all $k$. This implies that $\xi z_\tau = 0$ where $z_\tau \in \cZ(M)$ is the support projection of $\tau$. Since $\tau$ was arbitrary, it follows that $\xi = 0$.
\end{proof}

\begin{remark}\label{rem}
We should point out that we have no concrete examples of vector topologies on $\aff(M \ovt M\op)$ satisfying the conditions in the second part of Corollary \ref{cor} and that are strictly weaker than the measure topology (in fact, it is not even clear whether such a topology exists!).
Let us also point out that there are other weak continuity properties of $\delta$ implying that $\delta$ has closed graph, thus following inner by the first part of Corollary \ref{cor}. For instance, by using a similar argument as above, one can easily prove that this is the case when $\delta$ satisfies the following weak continuity property: whenever $x_n$ is a sequence in $M$ such that $\|x_n\| \recht 0$, there exists a sequence of projections $p_n \in M \ovt M\op$ such that $p_n \recht 1$ strongly, $\delta(x_n) p_n \in M \ovt M\op$ and $\delta(x_n) p_n \recht 0$ $\sigma$-weakly.
\end{remark}


\begin{thebibliography}{ABC90}\setlength{\itemsep}{-1mm} \setlength{\parsep}{0mm} \small

\bibitem[Al13]{Al13} V. Alekseev, On the first continuous $L^2$-cohomology of free group factors. {\it Preprint.} {\tt arXiv:1312.5647}

\bibitem[AK11]{AK11} V. Alekseev and D. Kyed, Measure continuous derivations on von Neumann algebras and applications to $L^2$-cohomology. {\it Preprint.} {\tt arXiv:1110.6155}

\bibitem[CS03]{CS03} A. Connes and D. Shlyakhtenko, $L^2$-homology for von Neumann algebras. {\it J. Reine Angew. Math.} {\bf 586} (2005), 125-168.

\bibitem[HL99]{HL99} U. Haagerup and F. Larsen, Brown's spectral distribution measure for $R$-diagonal elements in finite von Neumann algebras. {\it J. Funct. Anal.} {\bf 176} (2000), 331-367.

\bibitem[Po92]{Po92} S. Popa, Free-independent sequences in type II$_1$ factors and related problems. In {\it Recent Advances in Operator Algebras (Orl\'{e}ans, 1992),} Ast\'{e}risque {\bf 232} (1995), 187-202.

\bibitem[Po81]{Po81} S. Popa, On a problem of R.V.\ Kadison on maximal abelian $*$-subalgebras in factors. {\it Invent. Math.} {\bf 65} (1981), 269-281.

\bibitem[Ru91]{Ru91} W. Rudin, Functional Analysis, 2nd edition. McGraw-Hill, Singapore (1991).

\bibitem[Th06]{Th06} A. Thom,  $L^2$-cohomology for von Neumann algebras. {\it Geom. Funct. Anal.} {\bf 18} (2008), 251-270.
\end{thebibliography}
\end{document}